\numberwithin{equation}{section}
\theoremstyle{plain}
\newtheorem{theorem}{Theorem}[section]
\newtheorem{lemma}[theorem]{Lemma}
\newtheorem{corollary}[theorem]{Corollary}
\theoremstyle{definition}
\theoremstyle{remark}
\newtheorem{case[theorem]}{Case}
\def\x{{\bf x}}
\title[\parbox{14cm}{\centering{  Sharp $L^p\to L^r$ estimates for $k$-plane transforms \hspace{1in}}} \quad]{ Sharp $L^p\to L^r$ estimates for $k$-plane transforms in finite fields}
\author{ Doowon Koh and Dongyoon Kwak}
\address{Department of Mathematics\\
Chungbuk National University \\
Cheongju Chungbuk 28644, Korea}
\email{koh131@chungbuk.ac.kr}
\address{Department of Mathematics\\
Chungbuk National University \\
Cheongju Chungbuk 28644, Korea}
\email{yoon0506@chungbuk.ac.kr}
\thanks{Key words and phrases: $k$-plane transform, discrete Fourier analysis , finite fields\\
This research was supported by Basic Science Research Program through the National Research Foundation of Korea(NRF) funded by the Ministry of Education, Science and Technology(NRF-2015R1A1A1A05001374)}
\subjclass[2000]{44A12; 11T99}
\begin{document}

\begin{abstract} We study mapping properties of  finite field $k$-plane transforms.
Using geometric combinatorics, we do an elaborate analysis to recover the critical endpoint estimate.
As a consequence, we obtain optimal $L^p\to L^r$ estimates for all $k$-plane transforms in the finite field setting. In addition, applying 
H\"{o}lder's inequality to our results, we obtain an estimate for multilinear $k$-plane transforms. 
\end{abstract}
\maketitle

\section{Introduction}

Over the last few decades,  finite field analogs of  Euclidean harmonic analysis problems have been extensively studied.
Tom Wolff \cite{Wo} initially proposed the finite field Kakeya problem which was solved by Dvir \cite{Dv} using the polynomial method.
Adapting the method, Ellenberg, Oberlin, and Tao \cite{EOT} settled the finite field Kakeya maximal conjecture.
In 2004, the finite field restrict problem was also initiated by Mockenhaupt and Tao \cite{MT04}.
Like Euclidean case, the finite field restriction conjectures are still open  although some progress on this problem has been made by researchers (see, for example, \cite{Le14, LL13, Ko16, LL10, KS12, IK10}).
We also refer the reader to Wright's lecture note \cite{Wr13} for finite ring restriction problems. \\

After Mockenhaupt and Tao,  the finite field (maximal) averaging problem was formulated and studied by Carbery, Stones, and Wright \cite{CSW08}.
In the paper, they also initially studied mapping properties of finite field $k$-plane transforms. The main purpose of this paper is to give the complete answer to the boundedness problem on $k$-plane transforms in the finite field setting.
Let us review the definition and notation related to finite field $k$-plane transforms. Let $\mathbb F_q$ be a finite field with $q$ elements.
We denote by $\mathbb F_q^d, d\ge 2,$ a $d$-dimensional vector space over the finite field $\mathbb F_q$. We endow $\mathbb F_q^d$ with a normalized counting measure $d\x$ so that for $f: \mathbb F_q^d \to \mathbb C$ we have
$$ \|f\|_{L^p(\mathbb F_q^d, d\x)} =  \left\{\begin{array}{ll}  \left(q^{-d}\sum\limits_{\x\in \mathbb F_q^d} |f(\x)|^p\right)^{\frac{1}{p}}\quad &\mbox{if} \quad 1\le p<\infty\\
                                                  \max\limits_{\x\in \mathbb F_q^d} |f(\x)| \quad & \mbox{if} \quad p=\infty. \end{array}\right.$$

Given a fixed dimension $d\ge 2,$ let us choose  $1\le k \le d-1$ and denote by $M_k$ the set of all $k$-planes in $\mathbb F_q^d$, which means affine subspaces in $\mathbb F_q^d$ with dimension $k.$
From basic linear algebra, we notice that
$$ |M_k|\sim q^{(d-k)(k+1)}.$$
Moreover, if $\Pi_{k,s}$ denotes the number of $k$-planes containing a given $s$-plane with $0\le s \le k,$  then
$$ |\Pi_{k,s}|\sim q^{(d-k)(k-s)}.$$
Throughout this paper, for $X, Y>0$, we use $X\lesssim Y$ if there is a constant $C>0$ independent of $q$ such that $X\le CY$,
and $X\sim Y$ if $X\lesssim Y$ and $Y\lesssim X.$
We now endow $M_k$ with a normalized counting measure $\lambda_k$ so that for $g: M_k \to \mathbb C,$ we define its integral as
$$ \int_{M_k} g(\omega)~d\lambda_k(\omega) =\frac{1}{|M_k|} \sum_{\omega\in M_k} g(\omega),$$
where $|M_k|$ denotes the cardinality of the set $M_k.$
With the above notation, we define the $k$-plane transform $T_k$ of a function $f: \mathbb F_q^d \to \mathbb C$ as
$$ T_kf(\omega)=\int_{\omega} f(\x)~ d\sigma_\omega (\x) := \frac{1}{|\omega|} \sum_{\x\in \omega} f(\x),$$
where $d\sigma_\omega$ denotes the normalized surface measure on the $k$-plane $\omega \in M_k.$ In particular, the operator $T_k$ is called  the $X$-ray transform for $k=1$ and  the Radon transform for $k=d-1.$
In this finite field setting, the $k$-plane transform problem asks us to determine  exponents $1\le p, r\le \infty$ such that
the estimate
\begin{equation}\label{defT}\|T_kf\|_{L^r(M_k,d\lambda_k)} \lesssim \|f\|_{L^p(\mathbb F_q^d,d\x)}\end{equation}
holds for every function $f: \mathbb F_q^d \to \mathbb C,$
where the operator norm of $T_k$ must be independent of $q$ which is the size of the underlying finite field $\mathbb F_q.$
In the Euclidean setting, as a consequence of mixed norm estimates for the $k$-plane transform, this problem was completely solved by M. Christ \cite{Ch84} who improved on results of Drury \cite{Dr84}.
On the other hand,  Carbery, Stones, and Wright \cite{CSW08} obtained sharp restricted type estimates for all $k$-plane transforms in finite fields.
In fact, they proved that for the critical endpoint
$(1/p, 1/r)=((k+1)/(d+1), 1/(d+1))$, the estimate \eqref{defT} holds for all characteristic functions $f=\chi_E$ on any set $E\subset \mathbb F_q^d.$
More formally, they obtained the following result.
\begin{theorem}\label{Car} Let $d\geq 2$ and $1\leq k\leq d-1.$ If the estimate
\begin{equation}\label{CSW}\|T_kf\|_{L^r(G_k,d\lambda_k)} \lesssim \|f\|_{L^p(\mathbb F_q^d,d\x)}\end{equation}
holds for all functions $f$ on $\mathbb F_q^d$, then $(1/p, 1/r)$ lies on the convex hull $H$ of
$$((k+1)/(d+1), 1/(d+1)), (0,0),(1,1)~~\mbox{and}~~(0,1).$$
Conversely, if $(1/p, 1/r)$ lies in $H\setminus ((k+1)/(d+1), 1/(d+1)),$ then \eqref{CSW} holds for all functions $f$  on $\mathbb F_q^d.$ Furthermore,  the restricted type inequality
\begin{equation}\label{resWeak} \|T_kf\|_{L^{d+1}(M_k,d\lambda_k)} \lesssim
\|f\|_{L^{\frac{d+1}{k+1}, 1}(\mathbb F_q^d,d\x)}\end{equation}
holds for all functions $f$ on $\mathbb F_q^d.$\end{theorem}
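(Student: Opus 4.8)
The statement comprises three parts — necessity of the condition $(1/p,1/r)\in H$, the restricted-type endpoint bound \eqref{resWeak}, and sufficiency at every point of $H$ other than the critical endpoint — and I would treat them in that order, with almost all of the work going into the second.

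\textbf{Necessity.} Here one simply feeds good test functions into \eqref{CSW} and lets $q\to\infty$ (the point being that the operator norm is independent of $q$). Testing on a single point $f=\chi_{\{\x_0\}}$: since $\|f\|_{L^p(\mathbb F_q^d,d\x)}=q^{-d/p}$ while $T_kf$ equals $q^{-k}$ on the $\sim q^{k(d-k)}=|\Pi_{k,0}|$ planes through $\x_0$ and vanishes elsewhere, we get $\|T_kf\|_{L^r(M_k,d\lambda_k)}\sim q^{-k}q^{-(d-k)/r}$, and \eqref{CSW} forces $d/p\le (d-k)/r+k$, which is the edge joining the critical point to $(1,1)$. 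Testing on a fixed $k$-plane $f=\chi_V$, $V\in M_k$: now $\|f\|_{L^p}=q^{-(d-k)/p}$, and $T_kf(V)=1$ gives $\|T_kf\|_{L^r}\ge |M_k|^{-1/r}\sim q^{-(d-k)(k+1)/r}$, so \eqref{CSW} forces $1/p\le (k+1)/r$, the edge joining the critical point to $(0,0)$. Combined with the trivial memberships of $(0,0),(1,1),(0,1)$ in the region of validity ($T_k$ is an average, so $\|T_kf\|_{L^\infty}\le\|f\|_{L^\infty}$, and Fubini gives $\|T_kf\|_{L^1(M_k)}=\|f\|_{L^1(\mathbb F_q^d)}$) and the monotonicity of $L^p$-norms on the probability spaces $(\mathbb F_q^d,d\x)$ and $(M_k,d\lambda_k)$, these two inequalities pin the region of validity down to exactly $H$. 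This part is routine bookkeeping.

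\textbf{The restricted-type estimate.} This is the crux. First, by the layer-cake formula $|f|=\int_0^\infty\chi_{\{|f|>t\}}\,dt$, the pointwise bound $|T_kf|\le T_k|f|$, Minkowski's integral inequality, and $\|f\|_{L^{(d+1)/(k+1),1}(\mathbb F_q^d,d\x)}\sim\int_0^\infty (q^{-d}|\{|f|>t\}|)^{(k+1)/(d+1)}\,dt$, the estimate \eqref{resWeak} reduces to the case of characteristic functions, i.e.\ to proving $\|T_k\chi_E\|_{L^{d+1}(M_k,d\lambda_k)}\lesssim (q^{-d}|E|)^{(k+1)/(d+1)}$ for every $E\subset\mathbb F_q^d$. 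Since $T_k\chi_E(\omega)=q^{-k}|E\cap\omega|$ and $|M_k|\sim q^{(d-k)(k+1)}$, raising to the $(d+1)$st power shows this is equivalent to the incidence bound
\[ \sum_{\omega\in M_k}|E\cap\omega|^{d+1}\ \lesssim\ q^{k(d-k)}\,|E|^{k+1}. \]
To prove it, expand $|E\cap\omega|^{d+1}=\sum_{(\x_1,\dots,\x_{d+1})\in E^{d+1}}\prod_{i}\chi_\omega(\x_i)$ and swap the order of summation, so the left side becomes $\sum_{(\x_1,\dots,\x_{d+1})\in E^{d+1}}\#\{\omega\in M_k:\x_1,\dots,\x_{d+1}\in\omega\}$. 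The inner cardinality depends only on the dimension $s$ of the affine span of the $d+1$ points: it is $0$ if $s>k$ and $\sim|\Pi_{k,s}|=q^{(d-k)(k-s)}$ if $0\le s\le k$. Grouping the tuples by $s$ and bounding the number $N_s(E)$ of tuples whose span has dimension exactly $s$ by choosing an affine basis of the span out of $E$ (at most $\lesssim|E|^{s+1}$ ways, up to a combinatorial constant for the positions of the basis inside the tuple) and then placing each of the remaining $d-s$ entries inside that $s$-plane (at most $|E\cap(\text{$s$-plane})|\le\min(q^s,|E|)$ ways each), one gets $N_s(E)\lesssim|E|^{s+1}\min(q^s,|E|)^{d-s}$ and hence
\[ \sum_{\omega\in M_k}|E\cap\omega|^{d+1}\ \lesssim\ \sum_{s=0}^{k}q^{(d-k)(k-s)}\,|E|^{s+1}\,\min(q^s,|E|)^{d-s}. \]
It then remains to check, term by term, that each summand is $\lesssim q^{k(d-k)}|E|^{k+1}$: in the regime $|E|\ge q^s$ the ratio of the $s$th summand to the target collapses to $(q^s/|E|)^{k-s}\le 1$, and in the regime $|E|\le q^s$ it collapses to $(|E|/q^s)^{d-k}\le 1$; as there are only $k+1$ summands this finishes the incidence bound. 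I expect this two-regime bookkeeping to be the delicate point — in particular, both exponent computations come out nonnegative precisely \emph{because} we sit at the critical pair $\big((k+1)/(d+1),1/(d+1)\big)$, with no slack.

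\textbf{Sufficiency off the endpoint.} On the edges $\{1/p=0\}$ and $\{1/r=1\}$ of $H$ the estimate \eqref{CSW} is immediate from $\|T_kf\|_{L^r}\le\|T_kf\|_{L^\infty}\le\|f\|_{L^\infty}$ and from $\|T_kf\|_{L^1(M_k)}=\|f\|_{L^1(\mathbb F_q^d)}\le\|f\|_{L^p(\mathbb F_q^d)}$, respectively. On the two remaining half-open edges, which join $(0,0)$ and $(1,1)$ to the critical endpoint, one interpolates the restricted-type bound \eqref{resWeak} at the endpoint against these trivial strong-type bounds; since $p<r$ at every interior point of either segment (indeed $1/p-1/r$ is a positive multiple of $k$ there), real interpolation promotes restricted type to strong type at each such interior point. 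Interpolating once more between points already handled on the boundary fills the interior of $H$, yielding \eqref{CSW} for every $(1/p,1/r)\in H\setminus\{((k+1)/(d+1),1/(d+1))\}$ and completing the proof.
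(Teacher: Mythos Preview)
Your argument is correct. Note, however, that the paper does not itself prove Theorem~\ref{Car}: it is quoted as a result of Carbery, Stones, and Wright \cite{CSW08}, and the paper's own contribution is the strong-type endpoint (Theorem~\ref{newmain1}). So there is no ``paper's proof'' of this statement to compare against directly.

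That said, your restricted-type argument is essentially the Carbery--Stones--Wright proof, and it is also the combinatorial skeleton that the paper builds on in Section~2. Both you and the paper expand the $(d+1)$st moment $\sum_{\omega}|E\cap\omega|^{d+1}$ (or its weighted analogue), swap the order of summation, and stratify the resulting tuples $(\x^0,\dots,\x^d)$ by the dimension $s$ of their affine span, using $|\Pi_{k,s}|\sim q^{(d-k)(k-s)}$ for the plane count. Your bound $N_s(E)\lesssim |E|^{s+1}\min(q^s,|E|)^{d-s}$ is exactly the specialization to a single level set $E$ of the paper's more refined count $|L(s,i_0,\dots,i_d,\ell_0,\dots,\ell_s)|$ in \eqref{prod}. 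The difference is that for the \emph{strong}-type bound the paper must run this count simultaneously across all dyadic level sets $E_i$ of $f$ and sum the resulting geometric series, which is where the bookkeeping with the indices $\ell_0<\cdots<\ell_k$ and Lemma~\ref{lemU} enters; for the \emph{restricted}-type bound you need only a single $E$, and your two-regime check $(q^s/|E|)^{k-s}\le 1$, $(|E|/q^s)^{d-k}\le 1$ finishes it cleanly.

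Two minor remarks. First, your claim $\|T_kf\|_{L^1(M_k)}=\|f\|_{L^1(\mathbb F_q^d)}$ is in fact an exact equality (not just $\sim$), since $|\Pi_{k,0}|/|M_k|=q^{-(d-k)}$ exactly. Second, in the interpolation step you implicitly use $p\le r$ at both endpoints of each segment (needed for Marcinkiewicz to yield strong type); this holds since $p=r$ at $(0,0)$ and $(1,1)$ and $p=(d+1)/(k+1)<d+1=r$ at the critical point.
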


\subsection{Statement of the main result} The first part of Theorem \ref{Car} states the necessary condition for the boundedness of the $k$-plane transform. Note that the necessary condition would be in fact sufficient if the restricted type estimate \eqref{resWeak} can be extended to a strong type estimate. Therefore, to settle the finite field $k$-plane transform problem,
we only need to establish the strong type $L^{(d+1)/(k+1)} \to L^{d+1}$ estimate. In \cite{Ko13}, it was already proved that the strong type estimate holds for the $X$-ray transform ($k=1$) and the Radon transform ($k=d-1$).
To obtain the sharp estimate, methods of the discrete Fourier analysis and geometric combinatorics were used for the Radon transform and the $X$-ray transform, respectively.
In this paper, we extend the work to other $k$-plane transforms so that we obtain full solution of the finite field $k$-plane transform problem.
The main result we shall prove is as follows.
\begin{theorem}\label{newmain1} Let $d\geq 2$ and $1\leq k\leq (d-1).$  Then we have
$$  \|T_kf\|_{L^{d+1}(M_k,d\lambda_k)} \lesssim \|f\|_{L^{\frac{d+1}{k+1}}(\mathbb F_q^d,d\x)} \quad\mbox{for all}~~f ~~\mbox{on}~~\mathbb F_q^d.$$
\end{theorem}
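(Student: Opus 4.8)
The plan is to expand the $(d{+}1)$st power of $\|T_kf\|_{L^{d+1}}$ as an incidence sum over $(d{+}1)$-tuples of points, stratify the tuples by the dimension of their affine hull, and prove one geometric counting estimate per stratum. First I would reduce to $f\ge 0$, since $|T_kf(\omega)|\le T_k|f|(\omega)$ and $\||f|\|_{L^{(d+1)/(k+1)}}=\|f\|_{L^{(d+1)/(k+1)}}$. Then, using $|\omega|=q^k$, interchanging the order of summation, and using that the number of $k$-planes through a tuple $(x_1,\dots,x_{d+1})$ depends only on the dimension $m$ of its affine hull --- it equals $|\Pi_{k,m}|\sim q^{(d-k)(k-m)}$ if $m\le k$ and $0$ otherwise --- together with $|M_k|\sim q^{(d-k)(k+1)}$, I obtain
\[
\|T_kf\|_{L^{d+1}(M_k,d\lambda_k)}^{d+1}\ \sim\ \sum_{m=0}^{k}q^{-k(d+1)-(d-k)(m+1)}\,S_m(f),
\]
where $S_m(f)$ is the sum of $\prod_{i=1}^{d+1}f(x_i)$ over all tuples $(x_1,\dots,x_{d+1})$ whose affine hull has dimension exactly $m$. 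Since $\|f\|_{L^{(d+1)/(k+1)}(\mathbb F_q^d)}^{d+1}=\big(q^{-d}\sum_{x}f(x)^{(d+1)/(k+1)}\big)^{k+1}$ and only $k+1\le d$ strata occur, Theorem \ref{newmain1} reduces to proving, for each $0\le m\le k$, the ``key estimate''
\[
S_m(f)\ \lesssim\ q^{(d-k)m}\Big(\sum_{x\in\mathbb F_q^d}f(x)^{\frac{d+1}{k+1}}\Big)^{k+1},
\]
the relevant bookkeeping being the identity $k(d+1)+(d-k)(m+1)-d(k+1)=(d-k)m$.

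For $f=\chi_E$ the key estimate is elementary and consistent with \eqref{resWeak}: a tuple with $m$-dimensional hull contains an affinely independent $(m{+}1)$-subtuple spanning an $m$-plane $\pi$, its remaining $d-m$ entries lie in $E\cap\pi$, so choosing the independent coordinates and using $|E\cap\pi|\le\min(|E|,q^{m})$ gives $S_m(\chi_E)\lesssim|E|^{m+1}\min(|E|,q^{m})^{d-m}$, which is $\lesssim q^{(d-k)m}|E|^{k+1}$ whether $|E|\le q^{m}$ or $|E|\ge q^{m}$. For general $f$ I would carry the same geometry to the level sets: writing $f\sim\sum_j 2^{j}\chi_{E_j}$ with $E_j=\{2^j\le f<2^{j+1}\}$ pairwise disjoint, one has
\[
S_m(f)\ \sim\ \sum_{j_1,\dots,j_{d+1}}2^{j_1+\cdots+j_{d+1}}\,N_m\big(E_{j_1},\dots,E_{j_{d+1}}\big),
\]
where $N_m(F_1,\dots,F_{d+1})$ counts tuples $x_i\in F_i$ with $m$-dimensional hull; the task is to prove a sharp upper bound for $N_m$ in terms of the $|F_i|$ (again via ``$m{+}1$ independent coordinates plus $d-m$ points in the spanned $m$-plane'', now tracking which $F_i$ are small and using $|F_i\cap\pi|\le\min(|F_i|,q^m)$), and then to sum it against the weights $2^{j_1+\cdots+j_{d+1}}$.

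The main obstacle is precisely that final summation. Passing from $\chi_E$ to $f$ by the triangle inequality alone costs a factor of $\log q$; equivalently, the target point $(1/p,1/r)=((k{+}1)/(d{+}1),1/(d{+}1))$ is a vertex of the region $H$, so it cannot be reached by interpolating the estimates of Theorem \ref{Car}. Hence the bound for $N_m$ must be used in a form that exhibits genuine gain when the level sets $E_{j_i}$ have different sizes, and the $(d{+}1)$-fold sum must be organized --- ordering the indices, isolating the diagonal, and controlling the off-diagonal terms by a geometric series in the scale gaps $j_{i+1}-j_i$ --- so that the total is $\lesssim q^{(d-k)m}\big(\sum_j 2^{jp}|E_j|\big)^{k+1}\sim q^{(d-k)m}\big(\sum_x f(x)^{(d+1)/(k+1)}\big)^{k+1}$ with no surviving logarithm. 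The decisive stratum is $m=k$: there the key estimate is scale-invariant and the naive overcount $S_k(f)\le\sum_{\pi\in M_k}\big(\sum_{x\in\pi}f(x)\big)^{d+1}$ is circular (it merely reproduces the theorem), whereas for $m<k$ the right-hand side carries a positive power of $q$ and leaves slack; so everything rests on the sharp combinatorial bound for $N_k$ together with its loss-free summation, which is the ``elaborate analysis'' advertised in the abstract.
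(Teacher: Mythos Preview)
Your overall architecture matches the paper's exactly: expand the $(d{+}1)$st power, stratify by the dimension of the affine hull, pass to a dyadic level-set decomposition, and then sum. You also correctly identify that $m=k$ is the critical stratum and that the issue is avoiding the $\log q$ loss. So the plan is right; the gap is in the geometric bound you propose for $N_m$.

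Your bound for $N_m(F_1,\dots,F_{d+1})$ picks an independent $(m{+}1)$-subset $I$ and controls each remaining coordinate by $\min(|F_i|,q^m)$. This is too coarse. To cover every tuple you must sum (or take the maximum) over all choices of $I$, and for some $I$ the resulting geometric series diverges. Concretely, take $d=4$, $k=2$, $I=\{0,1,4\}$: with $\min\le q^2$ you get $q^{-k(d-k)}N_k\lesssim |E_{i_0}||E_{i_1}||E_{i_4}|$, and after summing out $i_2,i_3$ and using $|E_{i_4}|\le 2^{5i_4/3}$ you face $\sum_{i_4\ge i_1}2^{-i_4+5i_4/3}=\sum 2^{2i_4/3}=\infty$. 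In general the convergence condition for the innermost sum becomes $\ell_k<(d+1)k/(k+1)$, which fails whenever $\ell_k=d$.

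What the paper does instead---and what your outline is missing---is to record not just \emph{which} $m{+}1$ coordinates are independent but the \emph{order} in which independence is achieved: the profile $0=\ell_0<\ell_1<\cdots<\ell_m\le d$, where $\ell_j$ is the first index at which the span reaches dimension $j$. Then a non-pivotal coordinate $x^t$ with $\ell_j<t<\ell_{j+1}$ lies in the $j$-plane already spanned, so it contributes $\min(|E_{i_t}|,q^{\,j})$ rather than $\min(|E_{i_t}|,q^m)$. After taking $\min\le q^j$ for these factors one obtains the extra decay $q^{-\sum_{t=1}^{k}(\ell_t-t)}$ (their Lemma~\ref{lemU}); combined with $|E_{i_{\ell_t}}|\le q^d$ this yields $|E_{i_{\ell_t}}|\,q^{-(\ell_t-t)}\le 2^{\frac{(d+1)(d-\ell_t+t)}{d(k+1)}\,i_{\ell_t}}$, and it is precisely this improved exponent that makes every inner geometric series converge for every profile (their inequality \eqref{covc}). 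So the ``genuine gain when the level sets have different sizes'' that you invoke is supplied not by a clever choice of $I$ but by the refined dimension tracking, and without it the loss-free summation you describe cannot be carried out.
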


As mentioned before, this theorem for $k=1$ and $k=d-1$ was already obtained in \cite{Ko13} and thus our main result is new for the case $ 2\le k\le d-2.$
It is also known that  one can deduce the result of Theorem \ref{newmain1} for $k=1$ (the $X$-ray transform) by applying the finite field Kakeya maximal conjecture which was solved by Ellenberg, Oberlin, and Tao (see Theorem 1.3 and Remark 1.4 in \cite{EOT}). Likewise one could also derive the results of Theorem \ref{newmain1} for $ 2\le k\le d-1$ if one could prove conjecture on $k$-plane maximal operator estimates in finite fields (see Conjecture 4.13 in \cite{EOT}). However, the conjecture has not been solved (see \cite{Bu} for the best known result on this problem).

\vskip0.25in
By repeatedly using H\"{o}lder's inequality, the following estimate for a multilinear $k$-plane transform can be deduced from Theorem  \ref{newmain1}.
\begin{corollary}\label{corfun} With the assumption of Theorem \ref{newmain1}, we have
$$ \left\|\prod_{j=1}^{d+1} T_kf_j\right\|_{L^{1}(M_k,d\lambda_k)} \lesssim \,\,\prod_{j=1}^{d+1}\|f_j\|_{L^{\frac{d+1}{k+1}}(\mathbb F_q^d,d\x)}$$
for all functions $f_j, j=1,2,\ldots, (d+1),$ on $\mathbb F_q^d.$
\end{corollary}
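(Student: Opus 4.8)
\textbf{Proof proposal for Corollary \ref{corfun}.}
The plan is to reduce the multilinear estimate to $d+1$ applications of the linear estimate in Theorem \ref{newmain1} via H\"older's inequality, exploiting that the exponent $d+1$ in the target space is exactly the number of factors. First I would write out the left-hand side as an integral against the probability measure $\lambda_k$:
$$\left\|\prod_{j=1}^{d+1} T_kf_j\right\|_{L^{1}(M_k,d\lambda_k)}=\int_{M_k}\prod_{j=1}^{d+1}\bigl|T_kf_j(\omega)\bigr|~d\lambda_k(\omega).$$
Since $\sum_{j=1}^{d+1}\frac{1}{d+1}=1$, H\"older's inequality with the $d+1$ conjugate exponents all equal to $d+1$ gives
$$\int_{M_k}\prod_{j=1}^{d+1}\bigl|T_kf_j(\omega)\bigr|~d\lambda_k(\omega)\le \prod_{j=1}^{d+1}\left(\int_{M_k}\bigl|T_kf_j(\omega)\bigr|^{d+1}~d\lambda_k(\omega)\right)^{\frac{1}{d+1}}=\prod_{j=1}^{d+1}\|T_kf_j\|_{L^{d+1}(M_k,d\lambda_k)}.$$

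Next I would invoke Theorem \ref{newmain1} for each index $j=1,2,\ldots,d+1$, which bounds $\|T_kf_j\|_{L^{d+1}(M_k,d\lambda_k)}$ by a constant (independent of $q$) times $\|f_j\|_{L^{(d+1)/(k+1)}(\mathbb F_q^d,d\x)}$. Multiplying these $d+1$ bounds together yields the claimed inequality with implied constant the $(d+1)$-th power of the constant from Theorem \ref{newmain1}, hence still independent of $q$. Because $\lambda_k$ is a normalized (probability) measure, no combinatorial factors of $|M_k|$ intervene, so the H\"older step is completely clean.

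There is essentially no genuine obstacle here: all the analytic content is carried by Theorem \ref{newmain1}, and the only point meriting a line of care is verifying that the exponents match, namely that the reciprocal of the target exponent $d+1$, summed over the $d+1$ factors, equals $1$, and that the measure on $M_k$ being a probability measure makes H\"older's inequality applicable without normalization constants. One could also remark, as a sanity check, that the resulting inequality is consistent with the necessary conditions of Theorem \ref{Car}, since the endpoint $((k+1)/(d+1),1/(d+1))$ is precisely the vertex that makes the multilinear bound scale correctly.
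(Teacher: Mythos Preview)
Your proposal is correct and follows essentially the same approach as the paper: apply H\"older's inequality with all $d+1$ exponents equal to $d+1$ (using $\sum_{j=1}^{d+1}\frac{1}{d+1}=1$) to bound the $L^1$ norm of the product by $\prod_{j=1}^{d+1}\|T_kf_j\|_{L^{d+1}(M_k,d\lambda_k)}$, and then invoke Theorem~\ref{newmain1} for each factor. The paper's proof is just a terser version of what you wrote.
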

\begin{proof}
Since $1=\sum_{t=1}^{d+1} \frac{1}{d+1},$  if we repeatedly use H\"{o}lder's inequality, we see that
$$\left\|\prod_{j=1}^{d+1} T_kf_j\right\|_{L^{1}(M_k,d\lambda_k)} 
\le \,\,\prod_{j=1}^{d+1} \|T_kf_j\|_{L^{d+1}(M_k,d\lambda_k)},$$
and so the statement of corollary follows immediately from Theorem \ref{newmain1}. 
\end{proof}
Taking $f=f_j$ for $j=1,2,\ldots, (d+1)$, notice that Corollary \ref{corfun} also implies Theorem \ref{newmain1}.
It would be interesting to find another proof of Corollary \ref{newmain1}  
(see, for example, \cite{BCW, Ca}).

\section{Proof of the main theorem (Theorem \ref{newmain1})}

We start proving Theorem \ref{newmain1} by making certain reductions. We aim to prove for each integer $1\leq k\leq (d-1)$ that the estimate
\begin{equation}\label{aim1} \|T_kf\|_{L^{d+1}(M_k,d\lambda_k)} \lesssim \|f\|_{L^{\frac{d+1}{k+1}}(\mathbb F_q^d,d\x)} = \left(q^{-d} \sum_{\x\in \mathbb F_q^d} |f(\x)|^{\frac{d+1}{k+1}}\right)^{\frac{k+1}{d+1}}\end{equation}
holds for all functions $f$ on $\mathbb F_q^d,$
where we recall that $M_k$ denotes the collection of all affine $k$-planes in $\mathbb F_q^d.$ Without loss of generality, we may assume that
$f$ is a non-negative real-valued function and
\begin{equation}\label{easy1} \sum_{\x\in \mathbb F_q^d} f(\x)^{\frac{d+1}{k+1}}=1.\end{equation}
Thus, we also assume that $\|f\|_\infty \leq 1.$ Furthermore, we may assume that $f$ is written by a step function
\begin{equation}\label{easy2} f(\x)=\sum_{i=0}^\infty 2^{-i} E_i(\x),\end{equation}
where $E_i^{\prime}$s are disjoint subsets of $\mathbb F_q^d$ and we write $E(\x)$ for the characteristic function $\chi_E$ on a set $E\subset \mathbb F_q^d,$ which allows us to use a simple notation. From (\ref{easy1}) and (\ref{easy2}), we also assume that
\begin{equation} \label{easy3} \sum_{j=0}^\infty 2^{-\frac{(d+1)j}{k+1}}|E_j|=1\quad\mbox{and so} ~~ |E_j|\leq 2^{\frac{(d+1)j}{k+1}}~~\mbox{for all}~~j=0,1,\cdots.\end{equation}
Thus, to prove \eqref{aim1}, it suffices to prove that
\begin{equation}\label{suff0} \|T_{k}f\|^{d+1}_{L^{d+1}( M_k, d\lambda_k)} \lesssim q^{-d(k+1)},\end{equation}
for all functions $f$ such that the conditions \eqref{easy2}, \eqref{easy3} hold. Since we have assumed that $f\geq 0$, it is clear that $T_{k}f$ is also a non-negative real-valued function on $M_k.$ By expanding the left hand side of the above inequality \eqref{suff0} and using the facts that $|\omega|=q^k$ for $\omega\in M_k$ and $|M_k|\sim q^{(d-k)(k+1)}$, we see that
$$ \|T_{k}f\|^{d+1}_{L^{d+1}( M_k, d\lambda_k)} = \frac{1}{|M_k|} \sum_{\omega\in M_k} \left(T_{k}f(\omega)\right)^{d+1} $$
$$\sim \frac{1}{q^{k(d+1)}} \frac{1}{q^{(d-k)(k+1)}} \sum_{i_0=0}^\infty  \dots \sum_{i_d=0}^\infty 2^{-(i_0+\dots +i_{d})} \sum_{(\x^0,\dots,\x^{d})\in E_{i_{0}}\times \dots \times E_{{i_{d}}}} \sum_{\omega\in M_k} \omega(\x^0)\dots \omega(\x^{d})$$
$$\sim \frac{1}{q^{k(d+1)}} \frac{1}{q^{(d-k)(k+1)}} \sum_{0=i_0\leq i_1\leq \dots\leq i_{d}<\infty}   2^{-(i_0+\dots +i_{d})} \sum_{(\x^0,\dots,\x^{d})\in E_{i_{0}}\times \dots \times E_{i_{d}}} \sum_{\omega\in M_k} \omega(\x^0)\dots \omega(\x^{d}),$$
where the last line follows from  the symmetry of $i_0, \cdots,i_{d}.$
Now, we decompose the sum over $(\x^0,\dots,\x^{d})\in E_{i_{0}}\times \dots \times E_{i_{d}}$ as

$$\sum_{(\x^0,\dots, \x^{d})\in E_{i_{0}}\times \dots \times E_{i_{d}}}=\sum_{s=0}^\infty \sum_{(\x^0,\dots,\x^{d})\in \Delta(s,i_0,\dots,i_{d})},$$
where $\Delta(s, i_0,\dots,i_{d}):=\{(\x^0,\dots,\x^{d})\in E_{i_{0}}\times \dots \times E_{i_{d}}: [\x^0,\dots,\x^{d}]~\mbox{is a}~s\mbox{-plane}\}$ and $[\x^0,\dots, \x^{d}]$ denotes the smallest affine subspace containing the elements $\x^0,\dots, \x^{d}.$ Now, notice that if $ s>k$ and  $(\x^0,\dots, \x^{d}) \in \Delta(s,i_0,\dots,i_{d}),$  then the sum over $\omega\in M_k$ vanishes. On the other hand, if $0\leq s\leq k,$ then the sum over $\omega\in M_k$
is same as the number of $k$-planes containing the unique $s$-plane, that is $\sim q^{(d-k)(k-s)}.$ From these observations and (\ref{suff0}), our task is to show that for all $E_i, i=0,1,\dots,$ satisfying the condition (\ref{easy3}),

\begin{equation}\label{suffmain} \sum_{i_0=0}^\infty \sum_{i_1\geq i_0}^\infty \cdots \sum_{i_{d}\geq i_{d-1}}^\infty 2^{-(i_0+i_1+\cdots +i_{d})} \sum_{s=0}^k |\Delta(s,i_0,\dots,i_{d})| q^{-s(d-k)} \lesssim 1.
\end{equation}

In \cite{Ko13}, it was shown that this inequality holds true for a simple case $k=1$, and so the sharp estimate for the $X$-ray transform was obtained.
However, when $k\ge 2$ and the dimension $d$ becomes bigger, it is not a simple problem to prove \eqref{suffmain}, because a lot of complicate cases happen in finding an upper bound of $|\Delta(s, i_0, i_1, \ldots, i_d)|.$
In the following subsections, we shall prove \eqref{suffmain} by making further reductions so that the proof of Theorem \ref{newmain1} will be complete.
\subsection{Proof of \eqref{suffmain}}
For each $s=0,1,\ldots, k$, it suffices to prove that
\begin{equation}\label{suffmain1} \sum_{i_0=0}^\infty \sum_{i_1\geq i_0}^\infty \cdots \sum_{i_{d}\geq i_{d-1}}^\infty 2^{-(i_0+i_1+\cdots +i_{d})}
|\Delta(s,i_0,\dots,i_{d})| q^{-s(d-k)} \lesssim 1.
\end{equation}
First fix $s=0,1,\ldots,k$ and the sets $E_{i_0}, E_{i_1}, \ldots, E_{i_d}.$
To find an upper bound of $|\Delta(s,i_0,\dots,i_{d})|$, we shall decompose the set $\Delta(s,i_0,\dots,i_{d})$ as a union of its disjoint subsets. For each $(\x^0, \x^1,\ldots, \x^d)\in  \Delta(s,i_0,\dots,i_{d})$ there are unique nonnegative integers $\ell_0, \ell_1, \ldots, \ell_s$ with
$0=\ell_0< \ell_1< \ell_2<\ldots<\ell_{s-1} < \ell_s \le d$ such that
$\x^0, \x^1,\ldots, \x^{\ell_j}$ determine a $j$-plane and $\x^0, \x^1, \ldots, \x^{\ell_{j}-1}$ determine a $(j-1)$-plane for $j=1,2,\dots, s,$ where we define $\ell_0=0.$ Therefore, we can write
$$ \Delta(s,i_0,\dots,i_{d})= \bigcup_{0=\ell_0< \ell_1<\ell_2<\dots <\ell_s\le d} L(s,i_0,\dots,i_{d},\ell_0, \ell_1, \ldots, \ell_s),$$
where $L(s,i_0,\dots,i_{d}, \ell_0, \ell_1, \ldots, \ell_s)$ consists of those members $(\x^0, \x^1,\ldots, \x^d) \in \Delta(s,i_0,\dots,i_{d})$ such that for every $j=1,2, \ldots, s$, the affine span of $\x^0, \x^1, \ldots, \x^{\ell_j}$ is of dimension $j$ and the affine span of $\x^0, \x^1, \ldots, \x^{\ell_{j}-1}$ is of dimension $j-1.$
Now, let us find an upper bound of $|L(s,i_0,\dots,i_{d},\ell_0, \ell_1, \ldots, \ell_s)|$ where $0=\ell_0< \ell_1<\dots <\ell_s\le d.$ It is clear that if $(\x^0, \x^1,\ldots, \x^d)\in L(s,i_0,\dots,i_{d},\ell_0, \ell_1, \ldots, \ell_s)$, then there are at most $|E_{i_{\ell_j}}|$ choices for $\x^{\ell_j}, j=0, 1, \ldots, s.$ In addition, if $\ell_{j} <t<\ell_{j+1}$, \footnote{Throughout this paper we shall assume that $\ell_{s+1}=d+1.$} then there are at most $\min\{|E_{i_t}|, q^{j}\}$ choices for $\x^t,$ because the point $\x^t$ must be contained in the affine $j$-plane of points $\x^0, \x^1, \ldots, \x^{\ell_j}$, otherwise $t$ would be greater than or equal to $\ell_{j+1}$ by the definition of $L(s,i_0,\dots,i_{d},\ell_0, \ell_1, \ldots, \ell_s).$ From these observations, it follows that
\begin{equation}\label{prod}|L(s,i_0,\dots,i_{d},\ell_0, \ell_1, \ldots, \ell_s)|\le \prod_{j=0}^s \left( |E_{i_{\ell_j}}|\prod_{t=\ell_j+1}^{\ell_{j+1}-1} \min\{|E_{i_t}|, q^j\}\right),\end{equation}
where we define that if $\ell_{j+1}=\ell_j+1,$ then
$$\prod_{t=\ell_j+1}^{\ell_{j+1}-1} \min\{|E_{i_t}|, q^j\}=1.$$
Let $A=\{j\in \{0,1,\ldots,s\}: \ell_{j+1}\ne \ell_j+1\}.$ Since $\sum_{j\in A} (\ell_{j+1}-\ell_j-1)=d-s\ge d-k$, the right hand side of \eqref{prod} has at least $(d-k)$ factors each of which takes a form $\min\{|E_{i_t}|, q^j\}$ for some $j\in A$ and $t$ with $\ell_j+1\le t \le \ell_{j+1}-1.$
Now, we estimate that $\min\{|E_{i_t}|, q^j\} \le q^j$ for the $(d-k)$ largest numbers in the set of such $t$,
and $\min\{|E_{i_t}|, q^j\} \le |E_{i_t}|$ for the rest $(k-s)$ numbers $t.$ By this way, we can obtain an upper bound of the right hand side of \eqref{prod} which we shall denote by $U(s, i_0, \ldots, i_d, \ell_0, \ell_1, \ldots, \ell_s).$ For example, if $d=7, k=4, s=2, \ell_0=0, \ell_1=2, \ell_2=5,$ then
$$ U(s, i_0, \ldots, i_d, \ell_0, \ell_1, \ldots, \ell_s)=|E_{i_0}||E_{i_1}||E_{i_2}| |E_{i_3}| q |E_{i_5}| q^2 q^2.$$
It is clear that
\begin{align*}
 |\Delta(s, i_0, \ldots, i_d)| &\lesssim
 \max_{0=\ell_0<\ell_1 <\cdots <\ell_s\le d} |L(s, i_0, \ldots, i_d, \ell_0, \ell_1, \ldots, \ell_s)|\\
 & \le \max_{0=\ell_0<\ell_1 <\cdots <\ell_s\le d} U(s, i_0, \ldots, i_d, \ell_0, \ell_1, \ldots, \ell_s).
\end{align*}

Thus, to prove the estimate \eqref{suffmain1}, it is enough to show that for every $s=0,1, \ldots, k$ and $0=\ell_0 <\ell_1 <\cdots <\ell_s\le d,$
\begin{equation} \label{reduce1}
\sum_{i_0=0}^\infty \sum_{i_1\geq i_0}^\infty \cdots \sum_{i_{d}\geq i_{d-1}}^\infty 2^{-(i_0+i_1+\cdots +i_{d})}
 ~U(s, i_0, \ldots, i_d, \ell_0, \ell_1, \ldots, \ell_s)~q^{-s(d-k)} \lesssim 1.
\end{equation}
We claim that it suffices to prove this estimate \eqref{reduce1} only for the case when $s=k.$
This claim follows by observing from the definition of $U$ that
given a value $U(s, i_0, \ldots, i_d, \ell_0, \ell_1, \ldots, \ell_s)$ for $s=0,1,\ldots, (k-1),$
we can choose numbers $\ell_1', \ell_2', \ldots, \ell_{s+1}'$ with $1\le \ell_1'< \ell_2'< \ldots, \ell_{s+1}'\le d$ such that
$$ U(s, i_0, \ldots, i_d, \ell_0, \ell_1, \ldots, \ell_s) ~q^{d-k} = U(s+1, i_0, \ldots, i_d, \ell_0, \ell_1', \ell_2', \ldots, \ell_{s+1}').$$
In fact, $\{\ell_1', \ell_2', \ldots, \ell_{s+1}'\}$ can be selected by adding one number, say $\ell'$, to
$\{ \ell_1, \ldots, \ell_s\},$ where $\ell'=\ell_{j_0} +1$ and  $j_{0}$ is defined by
$$j_0 =\min\{j\in \{0,1,\ldots,s\}: \ell_{j+1} \ne \ell_j +1\}.$$

Therefore, our final task is to prove that for every nonnegative integers $\ell_0, \ell_1, \ldots, \ell_k$ with $ 0=\ell_0 < \ell_1 < \cdots < \ell_k\le d,$ we have
\begin{equation}\label{reduce2}
S:=\sum_{i_0=0}^\infty \sum_{i_1\geq i_0}^\infty \cdots \sum_{i_{d}\geq i_{d-1}}^\infty 2^{-(i_0+i_1+\cdots +i_{d})}
 ~U(k, i_0, \ldots, i_d, \ell_0, \ell_1, \ldots, \ell_k)~q^{-k(d-k)} \lesssim 1.
\end{equation}
This shall be proved in the following subsection.
\subsection{ Proof of the estimate \eqref{reduce2}}
We begin with a preliminary lemma.
\begin{lemma} \label{lemU}
With the notation above, we have
\begin{align*}
q^{-k(d-k)}~U(k, i_0, \ldots, i_d, \ell_0, \ell_1, \ldots, \ell_k)=\left( \prod_{t=0}^k |E_{i_{\ell_t}}|   \right) \left( q^{-\sum\limits_{t=1}^k (\ell_t-t)}\right).
\end{align*}
\end{lemma}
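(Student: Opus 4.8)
The plan is to unwind the definition of $U$ in the special case $s=k$ and then carry out an elementary telescoping computation; there is no analytic content here. First recall that $U(k,i_0,\ldots,i_d,\ell_0,\ldots,\ell_k)$ was obtained from the product on the right of \eqref{prod} by replacing, among the factors of the form $\min\{|E_{i_t}|,q^j\}$, the $(d-k)$ largest-indexed ones by $q^j$ and the remaining $(k-s)$ ones by $|E_{i_t}|$. When $s=k$ the total number of such min-factors is $\sum_{j=0}^{k}(\ell_{j+1}-\ell_j-1)=\ell_{k+1}-\ell_0-(k+1)=d-k$ (using the conventions $\ell_{k+1}=d+1$ and $\ell_0=0$), which is exactly the number to be bounded by powers of $q$; hence \emph{every} such factor is replaced by $q^j$ and none survives as an $|E_{i_t}|$. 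Therefore
\[
U(k,i_0,\ldots,i_d,\ell_0,\ldots,\ell_k)=\Big(\prod_{t=0}^{k}|E_{i_{\ell_t}}|\Big)\,q^{N},\qquad
N:=\sum_{j=0}^{k} j\,(\ell_{j+1}-\ell_j-1).
\]

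Next I would evaluate $N$. Splitting off the ``$-1$'' contribution gives $N=\sum_{j=0}^k j(\ell_{j+1}-\ell_j)-\sum_{j=0}^k j$, and for the first sum an Abel (summation-by-parts) manipulation — reindexing $\sum_{j=0}^k j\,\ell_{j+1}=\sum_{j=1}^{k+1}(j-1)\ell_j$, subtracting $\sum_{j=1}^k j\,\ell_j$, and isolating the $j=k+1$ term — collapses it to $k\,\ell_{k+1}-\sum_{j=1}^k \ell_j=k(d+1)-\sum_{j=1}^k \ell_j$. Since $\sum_{j=0}^k j=k(k+1)/2$, this gives $N=k(d+1)-\sum_{j=1}^k\ell_j-k(k+1)/2$.

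Finally, multiplying through by $q^{-k(d-k)}$ turns the exponent $N$ into
\[
N-k(d-k)=k\big[(d+1)-(d-k)\big]-\sum_{j=1}^k\ell_j-\frac{k(k+1)}{2}
=\frac{k(k+1)}{2}-\sum_{j=1}^k\ell_j=-\sum_{t=1}^k(\ell_t-t),
\]
using $\sum_{t=1}^k t=k(k+1)/2$ once more, which is precisely the asserted identity. The only places demanding care are the two bookkeeping points: checking that for $s=k$ all $(d-k)$ min-factors genuinely become powers of $q$ (so that the worked example in the text, where a factor $|E_{i_t}|$ survives, is really an $s<k$ phenomenon), and handling the endpoints in the telescoping sum correctly — in particular the vanishing $j=0$ term and the convention $\ell_{k+1}=d+1$. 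I expect these to be the main (and essentially only) obstacles.
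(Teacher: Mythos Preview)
Your proof is correct and follows essentially the same approach as the paper: both start from $U(k,\ldots)=\big(\prod_{t=0}^k|E_{i_{\ell_t}}|\big)\,q^{\sum_{t=0}^k t(\ell_{t+1}-\ell_t-1)}$ and then perform the identical telescoping/Abel computation to simplify the exponent. You are in fact slightly more careful than the paper in explaining \emph{why} all $(d-k)$ min-factors become powers of $q$ when $s=k$ (since $k-s=0$), a point the paper simply asserts.
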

\begin{proof}
By the definition of $U$, we see that
\begin{align*} U(k, i_0, \ldots, i_d, \ell_0, \ell_1, \ldots, \ell_k)&= \prod_{t=0}^k |E_{i_{\ell_t}}|~ q^{t(\ell_{t+1}-\ell_t-1)}\\
&=\left( \prod_{t=0}^k |E_{i_{\ell_t}}|   \right) \left(q^{\sum\limits_{t=0}^k t(\ell_{t+1}-\ell_t-1)}\right),\end{align*}
where  $\ell_0=0$ and $\ell_{k+1}=d+1.$ It follows that
$$q^{-k(d-k)}~U(k, i_0, \ldots, i_d, \ell_0, \ell_1, \ldots, \ell_k)=\left( \prod_{t=0}^k |E_{i_{\ell_t}}|   \right) \left( q^{-k(d-k)+\sum\limits_{t=1}^k t(\ell_{t+1}-\ell_t-1)}\right).$$
Thus, the proof of Lemma \ref{lemU} will be complete  if we show that
$$  {-k(d-k)+\sum\limits_{t=1}^k t(\ell_{t+1}-\ell_t-1)} = -\sum\limits_{t=1}^k (\ell_t-t).$$
To prove  this equality, observe that
$$\sum\limits_{t=1}^k t(\ell_{t+1}-\ell_t) =-\ell_1-\ell_2-\cdots -\ell_k + k\ell_{k+1} =\left(-\sum_{t=1}^k \ell_t \right) +k(d+1).$$
Then we obtain that
\begin{align*}-k(d-k)+\sum\limits_{t=1}^k t(\ell_{t+1}-\ell_t-1) &= -k(d-k) - \sum_{t=1}^k \ell_t  +k(d+1) -\frac{k(k+1)}{2}\\
&= \frac{k(k+1)}{2} -\sum_{t=1}^k \ell_t = \sum_{t=1}^k t -\sum_{t=1}^k \ell_t = -\sum_{t=1}^k (\ell_t-t).\end{align*}
\end{proof}

We shall give the complete proof of the estimate \eqref{reduce2}.
From Lemma \ref{lemU}, we aim to prove that
$$ S=\sum_{i_0=0}^\infty \sum_{i_1\geq i_0}^\infty \cdots \sum_{i_{d}\geq i_{d-1}}^\infty 2^{-(i_0+i_1+\cdots +i_{d})} \left( \prod_{t=0}^k |E_{i_{\ell_t}}|   \right) \left( q^{-\sum\limits_{t=1}^k (\ell_t-t)}\right)
 ~\lesssim 1.$$
Write the term $S$ as
\begin{align*}S= \sum_{i_0=0}^\infty \sum_{i_1=0}^\infty \cdots \sum_{i_{d}= 0}^\infty
1_{i_0\le i_1\le \cdots \le i_d}~2^{-(i_0+i_1+\cdots +i_{d})} \left( \prod_{t=0}^k |E_{i_{\ell_t}}|   \right) \left( q^{-\sum\limits_{t=1}^k (\ell_t-t)}\right),
\end{align*}
where we define that $1_{i_0\le i_1\le \cdots \le i_d}=1$ if $i_0\le i_1\le \cdots \le i_d$, and $0$ otherwise. By the Fubini theorem, we can decompose  the sums as follows.\footnote{To simplify notation, the general term was omitted.}
\begin{equation}\label{sumchange} S=\sum_{i_0=0}^\infty \sum_{i_1=0}^\infty \cdots \sum_{i_{d}= 0}^\infty
= \sum_{i_0=0}^\infty \sum_{i_{\ell_1}=0}^\infty\sum_{i_{\ell_2}=0}^\infty \cdots \sum_{i_{\ell_k}= 0}^\infty
\sum_{i_{j_1}=0}^\infty \sum_{i_{j_2}=0}^\infty \cdots \sum_{i_{j_{d-k}}=0}^\infty,\end{equation}
where $j_1,j_2, \ldots, j_{d-k}$ denote natural numbers such that
$\{j_1, j_2, \ldots, j_{d-k}\}=\{1,2,\ldots, d\}\setminus \{\ell_1, \ell_2, \ldots, \ell_k\}$ and
$1\le j_1<j_2<\cdots < j_{d-k}\le d.$
For each $j_t, ~t=1,2,\ldots, d-k$, let us denote by $<j_t>$ the greatest element of $\{\ell_0, \ell_1, \ldots, \ell_k\}$ less than $j_t.$ From the definition of $1_{i_0\le i_1\le \cdots \le i_d},$ it is clear that
\begin{align*}S&\le  \sum_{i_0=0}^\infty \sum_{i_{\ell_1}\ge i_{0}}^\infty \cdots \sum_{i_{\ell_k}\ge i_{\ell_{k-1}}}^\infty
\sum_{i_{j_1}\ge i_{<j_1>}}^\infty \sum_{i_{j_2}\ge i_{<j_2>}}^\infty \cdots \sum_{i_{j_{d-k}} \ge i_{<j_{d-k}>}}^\infty 2^{-(i_0+i_1+\cdots +i_{d})} \left( \prod_{t=0}^k |E_{i_{\ell_t}}|   \right) \left( q^{-\sum\limits_{t=1}^k (\ell_t-t)}\right)\\
&=  \sum_{i_0=0}^\infty \sum_{i_{\ell_1}\ge i_{0}}^\infty \cdots \sum_{i_{\ell_k}\ge i_{\ell_{k-1}}}^\infty
\left( \prod_{t=0}^k |E_{i_{\ell_t}}|   \right) \left( q^{-\sum\limits_{t=1}^k (\ell_t-t)}\right)
\left(\sum_{i_{j_1}\ge i_{<j_1>}}^\infty  \cdots \sum_{i_{j_{d-k}} \ge i_{<j_{d-k}>}}^\infty 2^{-(i_0+i_1+\cdots +i_{d})}\right). \end{align*}
Inner sums can be computed by using a simple fact that the value of a convergent geometric series  is similar to the first term of the series. In addition, use the definition of $<j_t>,~t=1,2,\ldots, d-k,$ and  a simple fact that there are $(\ell_{t+1}-\ell_t-1)$ natural numbers between $\ell_t$ and $\ell_{t+1}$ for $t=0,1,\ldots, k.$
We are led to the estimate
$$ \sum_{i_{j_1}\ge i_{<j_1>}}^\infty \sum_{i_{j_2}\ge i_{<j_2>}}^\infty \cdots \sum_{i_{j_{d-k}} \ge i_{<j_{d-k}>}}^\infty 2^{-(i_0+i_1+\cdots +i_{d})} \sim \prod_{t=0}^k 2^{-(\ell_{t+1}-\ell_t) i_{\ell_t}}.$$
It follows that
\begin{align} \label{allmost}
S\nonumber&\lesssim \sum_{i_0=0}^\infty \sum_{i_{\ell_1}\ge i_{0}}^\infty \cdots \sum_{i_{\ell_k}\ge i_{\ell_{k-1}}}^\infty \left( \prod_{t=0}^k |E_{i_{\ell_t}}|   \right) \left( q^{-\sum\limits_{t=1}^k (\ell_t-t)}\right)
\left(\prod_{t=0}^k 2^{-(\ell_{t+1}-\ell_t) i_{\ell_t}}\right) \\
&=\sum_{i_0=0}^\infty \sum_{i_{\ell_1}\ge i_{0}}^\infty \cdots \sum_{i_{\ell_k}\ge i_{\ell_{k-1}}}^\infty  \left( |E_{i_0}|~ 2^{-\ell_1 i_0}\right)  \left( \prod_{t=1}^k |E_{i_{\ell_t}}|~ q^{-\ell_t+t} ~2^{-(\ell_{t+1}-\ell_t) i_{\ell_t}}\right).
\end{align}
Now, we shall observe that for each $t=1,2, \ldots, k,$
\begin{equation}\label{sizeE} |E_{i_{\ell_t}}|~ q^{-\ell_t+t} \le 2^{\frac{(d+1)(d-\ell_t+t)}{d(k+1)}i_{\ell_t}}.\end{equation}
Since $|E_{i_{\ell_t}}|\le q^d~$ (namely, $|E_{i_{\ell_t}}|^{1/d}\le q$), and $\ell_t -t\ge 0$, it is obvious that
$$ |E_{i_{\ell_t}}|^{\frac{\ell_t-t}{d}} \le q^{\ell_t-t} \quad
\mbox{or}\quad |E_{i_{\ell_t}}|^{\frac{\ell_t-t}{d}}q^{-\ell_t+t}\le 1.$$

On the other hand, we see from \eqref{easy3} that
$$ |E_{i_{\ell_t}}|\le 2^{\frac{(d+1) }{k+1}i_{\ell_t}}.$$
Then \eqref{sizeE} is easily shown by observing
\begin{align*}|E_{i_{\ell_t}}|~ q^{-\ell_t+t} &=|E_{i_{\ell_t}}|^{1-\frac{(\ell_t-t)}{d}} |E_{i_{\ell_t}}|^{\frac{(\ell_t-t)}{d}} q^{-\ell_t+t}\\
&\le |E_{i_{\ell_t}}|^{\frac{d-\ell_t+t}{d}} \le 2^{\frac{(d+1)(d-\ell_t+t)}{d(k+1)}i_{\ell_t}}. \end{align*}

From \eqref{allmost} and \eqref{sizeE}, we have
\begin{equation}\label{finalseries} S\lesssim \sum_{i_0=0}^\infty \sum_{i_{\ell_1}\ge i_{0}}^\infty\sum_{i_{\ell_2}\ge i_{\ell_1}}^\infty \cdots
\sum_{i_{\ell_k}\ge i_{\ell_{k-1}}}^\infty  \left( |E_{i_0}|~ 2^{-\ell_1 i_0}\right)  \left( \prod_{t=1}^k 2^{\left(\frac{(d+1)(d-\ell_t+t)}{d(k+1)}-\ell_{t+1}+\ell_t\right) i_{\ell_t}}\right).\end{equation}
Using a simple fact that the value of a convergent geometric series is similar as the first term of the series,
we shall repeatedly compute the inner sums
\begin{equation}\label{Se} \mbox{I}:= \sum_{i_{\ell_1}\ge i_{0}}^\infty\sum_{i_{\ell_2}\ge i_{\ell_1}}^\infty \cdots
\sum_{i_{\ell_k}\ge i_{\ell_{k-1}}}^\infty \left( \prod_{t=1}^k 2^{\left(\frac{(d+1)(d-\ell_t+t)}{d(k+1)}-\ell_{t+1}+\ell_t\right) i_{\ell_t}}\right)\end{equation}
from the variable $i_{\ell_k}$ to the variable $i_{\ell_1}.$
However, to repeatly compute the inner sums we must make sure that each geometric series converges.
To assert that each series is convergent, it will be enough to show that for every $r=1,2, \ldots, k,$
\begin{equation}\label{covc} \sum_{t=r}^k \left(\frac{(d+1)(d-\ell_t+t)}{d(k+1)}-\ell_{t+1}+\ell_t \right) <0.\end{equation}
Now le us see why  \eqref{covc} holds.
Multiplying \eqref{covc} by the factor $d(k+1),$ we see that  the statement \eqref{covc} is same as
$$\sum_{t=r}^k \left((d+1)(d-\ell_t+t) + d(k+1)(\ell_t-\ell_{t+1}) \right) <0.$$
Since $\sum\limits_{t=r}^k d =d(k-r+1)$ and $\sum\limits_{t=r}^k (\ell_t-\ell_{t+1})=\ell_r -\ell_{k+1} = \ell_r -(d+1),$ the above condition is equivalent to
$$ (d+1) \left( d(k-r+1) + \sum_{t=r}^k (t-\ell_t) \right) + d(k+1)\ell_r-d(d+1)(k+1) <0.$$
Write $d(k+1)\ell_r= d(k+1) (\ell_r-r)+ dr(k+1)$ and try to simplify the left hand side of the above inequality. Then, for $r=1,2,\ldots,k$,  we can easily see  that  the above inequality becomes
$$  dr(k-d) + (1-dk)(r-\ell_r) + (d+1) \sum_{t=r+1}^k (t-\ell_t) <0,$$
where we assume that  $\sum_{t=r+1}^k (t-\ell_t)=0$ if $k=1.$
This condition is clearly same as
\begin{equation}\label{reduce11} (dk-1)(\ell_r-r) < dr(d-k)+(d+1) \sum_{t=r+1}^k (\ell_t-t).\end{equation}
To prove this equality, let $\alpha=\ell_r-r\ge 0.$ Since  $\ell_0, \ell_1, \ldots, \ell_k$ are nonnegative integers with $0=\ell_0 < \ell_1<\ell_2<\cdots < \ell_k\le d,$ it is clear that
$\alpha=\ell_r-r \le \ell_t-t$ for all $t\ge r.$ Therefore, to prove \eqref{reduce11}, it will be enough to show that for $r=1,2, \ldots,k,$
\begin{equation}\label{mamuri} (dk-1)\alpha < dr(d-k)+(d+1)(k-r) \alpha.$$
Soving for $\alpha$, this inequality is equivalent to
$$ \alpha < \frac{dr(d-k)}{dr-k+r-1}.\end{equation}
Now, observe that for each $r=1,2,\ldots, k,$ the maximum value of $\ell_r$ happens in the case when
$\ell_k=d,~ \ell_{k-1}=d-1, ~\ell_{k-2}=d-2, \ldots, \ell_{r}=d-k+r.$ This implies that  $\alpha=\ell_r-r\le d-k.$
Hence, to prove \eqref{mamuri}, it suffices to show that for $r=1,2,\ldots, k<d,$
$$ d-k < \frac{dr(d-k)}{dr-k+r-1},$$
which is equivalent to the inequality $r<k+1.$
Since $r=1,2, \ldots, k,$ this inequality clearly holds. This proves \eqref{covc} which implies that  each of  inner sums \eqref{Se} is a convergent geometric series whose value is similar to its first term.
Computing the sum $\mbox{I}$ in \eqref{Se} by this fact, we have
$$ \mbox{I}\sim 2^{i_0\left( \sum\limits_{t=1}^k \frac{(d+1)(d-\ell_t+t)}{d(k+1)}-\ell_{t+1}+\ell_t\right)}$$
As before, since $\ell_{k+1}=d+1$ and $ \sum\limits_{t=1}^k (\ell_t-\ell_{t+1})=\ell_1-\ell_{k+1},$ we can check that
\begin{align*}\sum_{t=1}^k \left(\frac{(d+1)(d-\ell_t+t)}{d(k+1)}-\ell_{t+1}+\ell_t \right)
&= -\frac{(d+1)}{k+1}+\ell_1 + \frac{d+1}{d(k+1)} ~\sum_{t=1}^k (t-\ell_t)\\
&\le -\frac{(d+1)}{k+1}+\ell_1. \end{align*}
Hence, we see that
$$ \mbox{I} \lesssim 2^{i_0 \left(-\frac{(d+1)}{k+1}+\ell_1\right)}.$$
Recall the definition of $\mbox{I}$ in \eqref{Se}.
Then Combining this estimate with \eqref{finalseries} yields
$$ S \lesssim \sum_{i_0=0}^\infty |E_{i_0}|2^{-\frac{(d+1)}{k+1}i_0} =1,$$
where the equality follows from \eqref{easy3}. Thus, we finish the proof.

\end{document}